\DeclareMathAlphabet{\mathbbo}{U}{bbold}{m}{n}
\newcommand{\ber}{\text{Ber}}
\newcommand{\floor}[1]{\lfloor #1 \rfloor}
\newcommand{\gestoch}{\succeq}
\newcommand{\lestoch}{\preceq}
\newcommand{\N}{\ensuremath{\mathbb{N}}}
\newcommand{\R}{\ensuremath{\mathbb{R}}}
\newcommand{\E}{\ensuremath{\mathbb{E}}}
\renewcommand{\P}{\ensuremath{\mathbb{P}}}
\renewcommand{\epsilon}{\varepsilon}
\renewcommand{\liminf}{\underline{\lim}\quad}
\newcommand{\miniop}[3]{%
\renewcommand{\arraystretch}{0.6}
\begin{array}{c}
{\scriptstyle #1}\\
#2\\
{\scriptstyle #3}
\end{array}
\renewcommand{\arraystretch}{1}}
\newcommand{\1}{\mathbbo{1}}
\newcommand{\Var}{\text{Var }}
\newtheorem{theo}{Theorem}
\newtheorem{lemme}{Lemma}
\newtheorem{coro}{Corollary}
\newtheorem{definition}{Definition}
\bfseries\color{blue},
\lstdefinelanguage{Julia}%
  {morekeywords={abstract,break,case,catch,const,continue,do,else,elseif,%
      begin,end,export,false,for,function,immutable,import,importall,if,in,%
      macro,module,otherwise,quote,return,switch,true,try,type,typealias,%
      using,while},%
   sensitive=true,%
   alsoother={$},%
   morecomment=[l]\#,%
   morecomment=[n]{\#=}{=\#},%
   morestring=[s]{"}{"},%
   morestring=[m]{'}{'},%
}[keywords,comments,strings]%
\newtcbinputlisting{\codeJulia}[1]{enhanced,breakable,listing only,listing options={language=Julia},listing file={#1},colback=white}
\newtcbinputlisting{\extraitcodeJulia}[3]{enhanced,breakable,listing only,listing options={language=Julia,firstline={#2},lastline={#3}},listing file={#1},colback=white}
\author{Olivier \textsc{Garet}}
\title[Probabilistic proof for non--survival at criticality]{Probabilistic proof for non--survival at criticality: the Galton--Watson process and more}%{The critical Galton--Watson process vanishes}
\begin{document}
\subjclass[2000]{60K35, 82B43.}
\keywords{Galton--Watson process, growth model,renormalization}

\maketitle
\begin{abstract}
In a famous paper, Bezuidenhout and Grimmett demonstrated that the contact process dies out at the critical point.Their proof technique has often been used to study the growth of population patterns.
The present text is intended as an introduction to their ideas, with examples of minimal technicality. In particular, we recover the basic theorem about 
Galton--Watson chains: except in a degenerate case, survival is possible only if the fertility rate exceeds $1$. The classical proof that is taught in classrooms is essentially analytic, based on generating functions and convexity arguments. Following the Bezuidenhout--Grimmett way, we propose a proof that is more consistent with probabilistic intuition.
We also study the survival problem for a cooperative model, mixing sexual and asexual reproduction.
\end{abstract}

\section{Introduction}
Inspired by an article by Grimmett and Marstrand on supercritical  percolation in dimension $d\ge 3$, Bezuidenhout and Grimmett have shown in a famous article that the contact process vanishes at the critical point.
Their proof technique has often been used to study various growth  models.

The implementation of their proof technique is usually quite technical, as it relies on a renormalization procedure with quite complicated events as a basic brick.

The purpose of this article is therefore to introduce this technique with growth models for which the implementation is much simpler.

Among the growth models, the most famous is the Galton--Watson process.
The basic theorem concerns the probability of survival as a function of fertility: except in degenerate cases, survival is possible only if the fertility rate exceeds $1$.
The proof that is usually taught  -- see for example  Benaïm--El Karoui~\cite{BEK} or Durrett~\cite{Durrett} --  is essentially analytic. It relies on generating functions and convexity arguments, which may seem rather frustrating or at least quite miraculous.

We propose here, inspired by the work of Bezuidenhout and Grimmett, to give a proof that is more in line  with the probabilistic intuition.

This gives an introduction to the ideas of Bezuidenhout and Grimmett, with a model that is probably the simplest of the models that can be considered.
We then continue with the study of the survival problem on an original model, mixing sexual and asexual reproduction.

%It also gives  an introduction to their ideas, in the frame of the simple example of the Galton--Watson process.

In order to keep our text self-contained (maybe event suitable for a presentation to graduate students), the first section is devoted to the introduction of the Galton--Watson process with all the necessary results.
The new proof of the classical result comes in Section~2.
Section~3 is devoted to the introduction and the study of a new cooperative model, mixing sexual and asexual reproduction.
\section{Galton--Watson processes: definition and first properties}

Let $\nu,\mu$ be two distributions on $\N$.
The distribution $\nu$ is denoted as the offspring distribution, whereas $\mu$ 
is the distribution of the size of the initial population. 

We denote as  the 
Galton--Watson process with initial distribution $\mu$ and offspring distribution $\nu$
the Markov chain that starts with  $\mu$ as initial distribution, and whose transition matrix is given by
\begin{equation*}
p_{i,j}=
\begin{cases}
\nu^{*i}(j)\text{ if }i\ne 0\\
\delta_0(j)\text{ if }i=0
\end{cases}
\end{equation*}

One can build such a chain as follows:
Let $(X_i^n)_{i,j\ge 1}$, $Y_0$ be independent random variables with $Y_0\sim\mu$ and $X_i^n\sim\nu$ for every $i,n$.
Then, the sequence $(Y_n)_{n\ge 1}$ is recursively defined by

$$\forall n\ge 0\quad Y_{n+1}=\sum_{1\le i\le Y_n}X_i^n.$$
Then, $(Y_n)_{n\ge 0}$, is a Galton--Watson process with initial distribution $\mu$ and offspring distribution $\nu$.
The mean number of offspring  $m=\int_{\N} x\ d\nu(x)$ is denoted as the fertility.
If we define  $\mathcal{F}_n=\sigma(X_i^k,i\ge 1,k\le n)$, we have %facilement
%$$\E[Y_{n+1}|\mathcal{F}_n]=mY_n,$$ 
%d'où 
\begin{equation}
\label{puissance}
\E[Y_{n+1}|\mathcal{F}_n]=mY_n,\quad \E[Y_{n+1}]=m\E[Y_n]\text{ and }\E[Y_n]=m^n\E[Y_0]
\end{equation}
We define the time to extinction $\tau$ as follows: $\tau=\inf\{n\ge 0; Y_n=0\}$.

\begin{theo}
\label{mort}
If $m<1$, $\P(\tau>n)=O(m^n)$. Particularly, $\P(\tau<+\infty)=1$.
\end{theo}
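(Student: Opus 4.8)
The plan is to exploit the single identity $\E[Y_n]=m^n\E[Y_0]$ from \eqref{puissance} together with the absorbing nature of the state $0$, so that the whole statement reduces to a first-moment (Markov) bound.

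First I would record the elementary but crucial observation that $0$ is an absorbing state for the chain: once $Y_k=0$ we have $Y_{k+1}=0$, hence the sequence $(Y_n)$ stays at $0$ from the extinction time onwards. Consequently the event $\{\tau>n\}$ coincides with $\{Y_n\ge 1\}$; there is no need to track the intermediate generations. This is the only place where the specific dynamics enter.

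Next, since $Y_n$ is a nonnegative integer-valued random variable, Markov's inequality gives $\P(Y_n\ge 1)\le\E[Y_n]$. Plugging in the third identity of \eqref{puissance} yields $\P(\tau>n)=\P(Y_n\ge 1)\le m^n\E[Y_0]$, which is exactly $O(m^n)$ when $m<1$. For the final assertion I would note that the events $\{\tau>n\}$ decrease to $\{\tau=+\infty\}$, so that $\P(\tau=+\infty)=\lim_n\P(\tau>n)=0$, whence $\P(\tau<+\infty)=1$.

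The only real point requiring care — and what I expect to be the main obstacle — is the integrability of the initial population: the bound $m^n\E[Y_0]$ is vacuous when $\E[Y_0]=+\infty$. If one does not wish to assume $\E[Y_0]<+\infty$, I would first treat a single ancestor, setting $q_n=\P(Y_n\ge 1\mid Y_0=1)\le\E[Y_n\mid Y_0=1]=m^n$, and then invoke the branching property: conditionally on $Y_0=k$ the population splits into $k$ independent copies, so $\P(Y_n\ge 1\mid Y_0=k)=1-(1-q_n)^k$. This quantity is bounded both by $1$ and by $kq_n\le km^n$; summing against $\mu$ recovers $\P(\tau>n)\le m^n\E[Y_0]$ whenever the mean is finite, and in every case gives $\P(\tau>n)\to 0$ by dominated convergence, which already secures $\P(\tau<+\infty)=1$.
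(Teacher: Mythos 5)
Your proof is correct and follows essentially the same route as the paper: the identity $\{\tau>n\}=\{Y_n\ge 1\}$, Markov's inequality, and the moment formula $\E[Y_n]=m^n\E[Y_0]$ from \eqref{puissance}. Your additional remark about the case $\E[Y_0]=+\infty$ is a sensible refinement (the paper tacitly assumes the initial mean is finite, without which the stated $O(m^n)$ bound is vacuous), but it does not change the substance of the argument.
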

\begin{proof}
With~\eqref{puissance}, we have $\P(\tau>n)\le\P(Y_n\ge 1)\le \E[Y_n]=m^n\E[Y_0]$.
\end{proof}

\begin{theo}
\label{galtonindep}
Let $(X_n)_{n\ge 0}$ and  $(Y_n)_{n\ge 0}$ be independent  Galton--Watson processes with the same offspring distribution  $\nu$. Then, 
 $(X_n+Y_n)_{n\ge 0}$ is also a  Galton--Watson process with $\nu$ as offspring distribution.
\end{theo}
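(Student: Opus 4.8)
The plan is to show that the process $(Z_n)_{n\ge 0}:=(X_n+Y_n)_{n\ge 0}$ is a Markov chain whose transition matrix is exactly the Galton--Watson one attached to $\nu$, namely $p_{i,j}=\nu^{*i}(j)$ for $i\ne 0$ and $p_{0,j}=\delta_0(j)$. The initial law of $Z_0=X_0+Y_0$ is then automatically the convolution of the two initial laws, which is a perfectly legitimate distribution on $\N$; so the only thing that really requires attention is the transition mechanism.

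First I would work with an enlarged filtration: let $\mathcal{G}_n$ be the $\sigma$-algebra generated by the two initial populations together with all the offspring variables used to build the first $n$ generations of both chains, so that $X_n,Y_n$ and hence $Z_n$ are $\mathcal{G}_n$-measurable, while the offspring variables producing generation $n+1$ are independent of $\mathcal{G}_n$. The core computation is then the conditional law $\P(Z_{n+1}=k\mid\mathcal{G}_n)$. Conditionally on $\mathcal{G}_n$ the next $X$-generation $X_{n+1}$ is distributed as $\nu^{*X_n}$ and the next $Y$-generation $Y_{n+1}$ as $\nu^{*Y_n}$; and since the two processes are independent and each is driven by its own family of offspring variables, $X_{n+1}$ and $Y_{n+1}$ stay conditionally independent given $\mathcal{G}_n$. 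Therefore the conditional law of $Z_{n+1}=X_{n+1}+Y_{n+1}$ is the convolution $\nu^{*X_n}*\nu^{*Y_n}=\nu^{*(X_n+Y_n)}=\nu^{*Z_n}$.

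Once $\P(Z_{n+1}=k\mid\mathcal{G}_n)=\nu^{*Z_n}(k)$ is established---a quantity depending on the past only through $Z_n$---I would project onto the smaller filtration. Since $\sigma(Z_0,\dots,Z_n)\subseteq\mathcal{G}_n$, the tower property yields $\P(Z_{n+1}=k\mid Z_0,\dots,Z_n)=\nu^{*Z_n}(k)$ as well, which is precisely the Markov property with the announced Galton--Watson transition matrix; note that $Z_n=0$ forces $\nu^{*0}=\delta_0$, so the absorbing state is respected, matching the definition.

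The one point deserving care, and what I would flag as the main obstacle, is that the event $\{Z_n=\ell\}$ does not pin down the individual values $X_n$ and $Y_n$: it is compatible with every splitting $\ell=a+(\ell-a)$. The argument above sidesteps this because it conditions on the full data $\mathcal{G}_n$, which does fix $X_n$ and $Y_n$, and only afterwards projects down; consistency across all splittings is then automatic from the convolution identity $\nu^{*a}*\nu^{*b}=\nu^{*(a+b)}$, which makes the answer $\nu^{*\ell}$ independent of $a$. An equivalent and more vivid route is to relabel the offspring variables of the two chains at generation $n$ into a single family indexed by $1,\dots,Z_n$, so that $Z_{n+1}=\sum_{j=1}^{Z_n}W_j^n$ with the $W_j^n$ i.i.d. $\nu$; there the only nuisance is the measurability of the random cut-off $X_n$ separating the two blocks, a technicality that the conditional-law computation avoids entirely.
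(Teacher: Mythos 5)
Your proof is correct and follows essentially the same route as the paper's: both reduce to the identity $\nu^{*X_n}*\nu^{*Y_n}=\nu^{*(X_n+Y_n)}$, which makes the one-step transition of the sum depend on the pair $(X_n,Y_n)$ only through the value of $X_n+Y_n$. The only cosmetic difference is that the paper invokes the Dynkin lumping criterion for the pair chain $((X_n,Y_n))_{n\ge 0}$, whereas you prove that lumping step by hand via the enlarged filtration and the tower property.
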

\begin{proof}
Since $(X_n)_{n\ge 0}$ and  $(Y_n)_{n\ge 0}$ are independent Markov chains, $((X_n,Y_n))_{n\ge 0}$ is a Markov chain, with the transition matrix
$$p_{(x,a),(y,b)}=\nu^{* x}(a)\nu^{* y}(b).$$ Let us denote by 
$\P^{(x,y)}$ the distributions of the canonically associated Markov chains.
We must prove that if the function $f$ is defined by  $f(x,y)=x+y$, then $(f(X_n,Y_n))_{n\ge 0}$ is still a Markov Chain. To this aim, we apply the Dynkin criterion: it is sufficient to prove that whenever $x+y=r$, then
$\P^{(x,y)}(f(X_1,Y_1)=\ell)$ only depends on $r$ and $\ell$.
Also, under  $\P^{(x,y)}$, $X_1$ and $Y_1$ are independent random variables with  $\nu^{* x}$ and $\nu^{* y}$ as their respective distributions, so the distribution of
 $f(X_1,Y_1)$ is  $\nu^{* x}* \nu^{* y}=\nu^{* (x+y)}=\nu^{* r}$.
Finally, $\P^{(x,y)}(f(X_1,Y_1)=\ell)=\nu^{* r}(\{\ell\})$ and 
$(X_n+Y_n)_{n\ge 0}$ is a Galton--Watson process with $\nu$ as offspring distribution. Since the initial distribution is $\P_{X_0+Y_0}=\P_{X_0}*\P_{Y_0}=\mu_1*\mu_2$, we get the desired result.
\end{proof}

In the sequel, $\P^i$ denotes a probability measure under which
$(Y_n)_{n\ge 0}$ is a Galton--Watson process with initial distribution $\delta_{i}$ and offspring distribution $\nu$.

\begin{coro}
We have
\begin{itemize}
\item For each $n\ge 0$, $\P^n(\tau<+\infty)=\P^{1}(\tau<+\infty)^n$
\item For $n,\ell\ge 0$, $\P^n(\tau<+\infty|\mathcal{F}_{\ell})=\P^{1}(\tau<+\infty)^{Y_{\ell}}$.
\item For $n,\ell\ge 1$, we have $\P^n(\tau=+\infty)>0 \iff \P^{\ell}(\tau=+\infty)>0.$
\end{itemize}
\end{coro}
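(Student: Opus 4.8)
The plan is to establish the three items in order, since each builds on the one before. The engine for everything is Theorem~\ref{galtonindep}, which lets me decompose a process started from $n$ individuals into a sum of $n$ independent copies, each started from a single individual.

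For the first item I would start $n$ independent Galton--Watson processes $(Y^{(1)}_k)_{k\ge 0},\dots,(Y^{(n)}_k)_{k\ge 0}$, each with offspring law $\nu$ and each started from one individual, and set $Y_k=\sum_{i=1}^n Y^{(i)}_k$. By Theorem~\ref{galtonindep} (applied inductively) $(Y_k)_{k\ge 0}$ is a Galton--Watson process started from $\delta_n$, so its extinction time has law $\P^n$. The key observation is that, since the summands are nonnegative integers and $0$ is absorbing, the total process is eventually extinct if and only if every summand is eventually extinct: writing $\tau_i$ for the extinction time of the $i$-th copy, one has $\{\tau<+\infty\}=\bigcap_{i=1}^n\{\tau_i<+\infty\}$. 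Independence then yields $\P^n(\tau<+\infty)=\prod_{i=1}^n\P^1(\tau_i<+\infty)=\P^1(\tau<+\infty)^n$, the case $n=0$ being trivial since the process starts extinct.

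For the second item I would invoke the Markov property at time $\ell$. Conditionally on $\mathcal F_\ell$, the shifted process $(Y_{\ell+k})_{k\ge 0}$ is a Galton--Watson process with offspring law $\nu$ started from $Y_\ell$ individuals, and $\{\tau<+\infty\}$ is exactly the event that this shifted process ever reaches $0$ (on $\{Y_\ell=0\}$ the process is already extinct, consistent with the exponent being $0$). Hence on $\{Y_\ell=k\}$ the conditional probability equals $\P^k(\tau<+\infty)$, which by the first item equals $\P^1(\tau<+\infty)^k$; this is precisely $\P^1(\tau<+\infty)^{Y_\ell}$.

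The third item is then pure algebra, and I will use it to flag where the real work lies. Writing $q=\P^1(\tau<+\infty)\in[0,1]$, the first item gives $\P^n(\tau=+\infty)=1-q^n$, and for any $n\ge 1$ one has $1-q^n>0$ if and only if $q<1$; since this criterion does not depend on $n$, the positivity of $\P^n(\tau=+\infty)$ and of $\P^\ell(\tau=+\infty)$ are both equivalent to $q<1$, hence to each other. The only genuinely nontrivial point in the whole argument is the set identity $\{\tau<+\infty\}=\bigcap_{i=1}^n\{\tau_i<+\infty\}$ in the first item, together with the clean application of Theorem~\ref{galtonindep} that guarantees the sum really carries the law $\P^n$; everything else reduces to the Markov property and elementary inequalities.
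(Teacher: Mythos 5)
Your proposal is correct and follows essentially the same route as the paper: both rest on Theorem~\ref{galtonindep} to identify $\P^n$ with the law of a sum of $n$ independent copies (the paper phrases this as the recursion $\P^{n+1}(\tau<+\infty)=\P^{n}(\tau<+\infty)\P^{1}(\tau<+\infty)$ plus induction, while you unpack it directly as the independence of the events $\{\tau_i<+\infty\}$), then the Markov property for the second item and elementary algebra for the third. Your explicit justification of the set identity $\{\tau<+\infty\}=\bigcap_{i=1}^n\{\tau_i<+\infty\}$ is a welcome detail the paper leaves implicit.
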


\begin{proof}
  Thanks to Theorem~\ref{galtonindep}, we have $$\P^{n+1}(\tau<+\infty)=\P^{n}(\tau<+\infty)\P^{1}(\tau<+\infty),$$ then  $\P^{n}(\tau<+\infty)=\P^{1}(\tau<+\infty)^n$ follows by natural induction. This gives the first item.
Then, the second item follows from the Markov property. The last point is obvious.
\end{proof}

\begin{coro}
\label{souschaine}
Let $T\ge 1$.  $(Y_{Tn})_{n\ge 0}$ is a Galton--Watson process with offspring distribution $\P^1_{Y_T}$.
\end{coro}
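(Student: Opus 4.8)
The plan is to show that the sampled sequence $(Y_{Tn})_{n\ge 0}$ is a Markov chain whose transition matrix is exactly that of a Galton--Watson process with offspring distribution $\tilde\nu=\P^1_{Y_T}$. Write $P$ for the one-step transition matrix of $(Y_n)_{n\ge 0}$. The first ingredient is the general and elementary fact that subsampling a time-homogeneous Markov chain along the arithmetic progression $0,T,2T,\dots$ again produces a (time-homogeneous) Markov chain, whose transition matrix is the $T$-th power $P^{T}$. So the only genuine content is to identify $P^{T}$ with the Galton--Watson transition matrix attached to $\tilde\nu$, namely $p'_{i,j}=\tilde\nu^{*i}(j)$ for $i\ne 0$ and $p'_{0,j}=\delta_0(j)$.

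To compute $P^{T}$ I would use the notation $\P^i$ introduced above. By the Markov property and time-homogeneity, $(P^{T})_{i,j}=\P(Y_{m+T}=j\mid Y_m=i)=\P^i(Y_T=j)$; in other words the $T$-step transition probabilities from state $i$ are precisely the law of $Y_T$ when the process is started from $i$ individuals. For $i=0$ this law is $\delta_0$ because $0$ is absorbing, which matches the degenerate row $p'_{0,\cdot}=\delta_0$. For $i\ne 0$ the task reduces to establishing the identity $\P^i_{Y_T}=\tilde\nu^{*i}$.

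This last identity is where Theorem~\ref{galtonindep} does all the work. Starting from $i$ individuals amounts, by that theorem applied inductively (decomposing $i=1+\dots+1$), to superposing $i$ independent copies of the process each issued from a single individual; hence $Y_T$ under $\P^i$ has the law of a sum of $i$ independent copies of $Y_T$ under $\P^1$, which is exactly $(\P^1_{Y_T})^{*i}=\tilde\nu^{*i}$. Substituting this into the previous step yields $(P^{T})_{i,j}=\tilde\nu^{*i}(j)$ for $i\ne 0$, so $P^{T}$ is the transition matrix of a Galton--Watson process with offspring distribution $\tilde\nu$, and $(Y_{Tn})_{n\ge 0}$ is such a process. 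The main obstacle is this additivity step $\P^i_{Y_T}=(\P^1_{Y_T})^{*i}$, but it is mild: Theorem~\ref{galtonindep} is already available, and the preceding corollary has already used it in precisely this additive form, so the argument requires only routine bookkeeping once the reduction to $P^{T}$ is made.
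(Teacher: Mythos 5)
Your argument is correct and follows the paper's proof essentially verbatim: the paper likewise notes that the subsampled chain is Markov, identifies its transition probabilities as $\P^i(Y_T=\cdot)$, applies Theorem~\ref{galtonindep} inductively ($i-1$ times) to obtain $\P^i_{Y_T}=(\P^1_{Y_T})^{*i}$, and treats the absorbing state $0$ separately. Nothing is missing.
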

\begin{proof}
Since $(Y_n)$ is a Markov chain, it is well known that so does $(Y_{Tn})_{n\ge 0}$. Let us compute the transition probabilities.

Let $k\ge 1$.
Applying Theorem~\ref{galtonindep} ($k-1$ times), we see that if the processes $(Y^1_t)_{t\ge 0}$, $(Y^2_t)_{t\ge 0}, $\dots$ (Y^k_t)_{t\ge 0}$ are independent Galton--Watson processes with $\delta_1$ as their common initial distribution and $\nu$ as offspring distribution,
then $(Y^1_t+\dots Y^k_t)_{t\ge 0}$ is a  Galton--Watson process with  $\delta_{k}$ as initial distribution and $\nu$ as  offspring distribution.
Then,
$$\P^k(Y_T=\ell)=\P(Y^1_T+\dots Y^k_T=\ell)=\P_{Y^1_T}^{*k}(\ell).$$
Also, $\P^0(Y_T=\ell)=\delta_0(\ell)$: this gives the desired result.
\end{proof}

\section{A probabilistic proof}

In the first step of the proof, we show that a certain growth process may survive, with the idea that the process that we finally want to study will be compared to the surviving reference process.
In the present paper, the reference process is a Galton--Watson process too.
However in general, the reference process may belong to a related family.
For example, Bezuidenhout and Grimmett compared the contact process to a supercritical oriented percolation process.

\subsection{Survival in the supercritical phase}

\begin{theo}
\label{surcritique}
If $m>1$, then $\P^{1}(\tau=+\infty)>0$.
\end{theo}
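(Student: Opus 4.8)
The plan is to avoid generating functions entirely and instead let the law of large numbers drive the growth of the population, exactly in the renormalization spirit announced above: the surviving reference process will be a Galton--Watson process with \emph{bounded} offspring. First I would reduce to that bounded case. Since $\E[X\wedge c]\uparrow m>1$ as $c\to+\infty$ by monotone convergence, I can fix $c$ with $\E[X\wedge c]>1$; coupling $X_i^n\wedge c\le X_i^n$ in the recursive construction yields a Galton--Watson process $(\tilde Y_n)$ with $\tilde Y_n\le Y_n$ for all $n$, so that $\{\tilde\tau=+\infty\}\subseteq\{\tau=+\infty\}$ and it suffices to prove survival for $(\tilde Y_n)$. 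The gain is that the truncated offspring is bounded, hence has finite variance $\sigma^2$, which we did not assume for $\nu$. From now on I therefore assume $\nu$ itself is bounded, with mean $m>1$ and variance $\sigma^2$.

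Next, fix $a\in(1,m)$. The engine of the proof is that a large population almost surely grows: writing $Y_1=\sum_{i=1}^{L}X_i^0$ under $\P^L$, Chebyshev's inequality gives $\P^L(Y_1<aL)\le \sigma^2/((m-a)^2 L)$, so that $q_M:=\inf_{L\ge M}\P^L(Y_1\ge aL)$ satisfies $1-q_M\le \sigma^2/((m-a)^2 M)$ and, being an infimum over a shrinking family, is nondecreasing in $M$ with $q_M\to 1$. I would then renormalize over geometrically growing thresholds. Fix $K$ and set $A_n=\{Y_n\ge a^n K\}$. Working under $\P^K$, so that $A_0$ holds surely, the Markov property shows that on $A_{n-1}$ we have $Y_{n-1}=L\ge a^{n-1}K$ and $a^n K\le aL$, whence $\P(A_n\mid\mathcal{F}_{n-1})\ge\P^L(Y_1\ge aL)\ge q_{a^{n-1}K}$. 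The tower property then yields
\begin{equation*}
\P^K\!\Big(\bigcap_{n\ge 0}A_n\Big)\ge\prod_{n\ge 1}q_{a^{n-1}K}.
\end{equation*}

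Because $1-q_{a^{n-1}K}\le \sigma^2/((m-a)^2 a^{n-1}K)$ is the general term of a convergent geometric series, $\sum_{n\ge1}(1-q_{a^{n-1}K})<+\infty$; choosing $K$ large enough that moreover $q_K>0$ (possible since $q_M$ is nondecreasing with $q_M\to1$, so every factor is at least $q_K>0$), the infinite product is strictly positive. On $\bigcap_n A_n$ one has $Y_n\ge a^n K\to+\infty$, hence $\tau=+\infty$, and therefore $\P^K(\tau=+\infty)>0$ for this $K$. Finally I would descend from $K$ to $1$ via the last item of the first corollary above, which gives $\P^K(\tau=+\infty)>0\iff\P^{1}(\tau=+\infty)>0$. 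The main obstacle is not any single estimate but arranging the truncation together with the geometric thresholds so that the Chebyshev bound becomes summable: without the finite variance supplied by truncation the weak law alone furnishes no rate and the product could vanish, so the whole scheme is designed precisely to convert \emph{expected} growth into \emph{almost sure} growth along a positive-probability event.
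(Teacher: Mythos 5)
Your proposal is correct and follows essentially the same route as the paper: truncate the offspring to gain a finite variance, use Chebyshev to show that a population of size $L$ multiplies by $a\in(1,m)$ except with probability $O(1/L)$, and conclude that the infinite product over the geometric thresholds $a^nK$ is positive. The only cosmetic differences are that you package the truncation as an explicit coupled subprocess $\tilde Y_n\le Y_n$ (the paper instead truncates inside the Chebyshev estimate) and that you explicitly invoke the corollary to pass from $\P^K$ back to $\P^1$; your version of the Chebyshev bound, with $(m-a)^2$ in the denominator, is also the correct one.
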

\begin{proof}
Let $a$ with $1<a<m$. We have
$$\miniop{}{\lim}{M\to +\infty} \int x\wedge M \ d\nu= \int  x \ d\nu=m,$$ so there exists $M$ with  $\int x\wedge M \ d\nu>a$.  For $k\ge n$, we have

\begin{align*}
\P^k(Y_1<na)&=\P(X_1+\dots X_k<na)\\\le &\P(X_1\wedge M+\dots X_n\wedge M<na)\\& =\P(n\E[X_1\wedge M]-(X_1\wedge M+\dots X_n\wedge M))> (\E[X_1\wedge M]-a)n)\\& \le\frac{\Var X_1\wedge M}{(\E[X_1\wedge M]-a)n},
\end{align*}
by the  Tchebitchef inequality. 
Let us define $\phi(k,x)=\P^k(Y_1<x)$ and consider $n>c=\frac{\Var (X_1\wedge M)}{\E[X_1\wedge M]-a}$.\\
Let $t\ge 0$. By the Markov property, for each
 $A\in \mathcal{F}_t$ with $A\subset\{Y_t\ge n\}$, we can write
\begin{align*}
\P(A\cap \{Y_{t+1}<an\})&=
\E[\1_A  \1_{Y_{t+1}<an\}}]=\E[\1_A\E[\1_{Y_{t+1}<an\}}|\mathcal{F}_t]]\\
&=\E[\1_A \phi(Y_t,an)]\le \E [\1_A c/n]=c/n\P(A),
\end{align*}
so $\P(Y_{t+1}\ge an| A)\ge 1-\frac{c}n$.\\
By natural induction, it follows that for $A_t=\miniop{t}{\cap}{i=1}\{Y_{t}\ge na^t\}$, we have
$$\P^n(A_t)\ge\miniop{t-1}{\prod}{i=0}\left(1-\frac{c}{na^i}\right),$$
then
$\P^n(\tau=+\infty)\ge\P^n(\forall t\ge 0\quad Y_{t}\ge na^t)\ge\prod_{i=0}^{+\infty} \left(1-\frac{c}{na^i}\right)>0.$
\end{proof}

Some remarks:
\begin{itemize}
\item Obviously, the bound $1-\frac{c}n$ is very bad, coming from the Tchebitchev inequality. We were doing better with the Höffding inequality, but that is sufficient for our purpose.
\item  The same pattern can be applied to demonstrate that survival is possible for a multitype Galton--Watson process whose fertility matrix has a spectral radius strictly greater than~1 (see for example~\cite{Garet-livre}).
\end{itemize}
\subsection{Survival is a local property}

\begin{theo}
\label{equivalence}
Let $(Y_n)_{n\ge 0}$ be a  Galton--Watson process with offspring distribution~$\nu$.
Suppose that $\nu(0)>0$. Then there is an equivalence between:
\begin{itemize}
\item $\exists N,T\ge 1\quad \P^N(Y_T\ge 2N)>\frac12$.
\item $\P^1(\tau=+\infty)>0$.
\end{itemize}
\end{theo}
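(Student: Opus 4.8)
The plan is to prove the two implications separately. For the direction ``$\exists N,T\ \P^N(Y_T\ge 2N)>\frac12\ \Rightarrow\ \P^1(\tau=+\infty)>0$'' I would run a renormalization in the spirit of Bezuidenhout and Grimmett, treating a group of $N$ individuals as a single \emph{block}. Fix $N,T$ with $p:=\P^N(Y_T\ge 2N)>\frac12$ and observe the process only at times that are multiples of $T$. Starting from one block of $N$ individuals, after one period of length $T$ there are at least $2N$ descendants with probability exactly $p$; when this happens I keep $2N$ of them (say those carrying the smallest labels) and split them into two fresh blocks of $N$, and otherwise I declare the block dead and discard its descendants. Because distinct blocks are founded by disjoint sets of individuals, the branching property (Theorem~\ref{galtonindep}) ensures that they evolve independently, so the number of blocks $(B_k)_{k\ge 0}$ is itself a Galton--Watson process whose offspring law is $2$ with probability $p$ and $0$ with probability $1-p$. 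Its mean $2p$ exceeds $1$, so by Theorem~\ref{surcritique} it survives with positive probability. Since the coupling only ever discards individuals, $Y_{Tk}\ge N B_k$ for every $k$, hence on the survival event of $(B_k)$ the original process never reaches $0$; this gives $\P^N(\tau=+\infty)>0$, and the corollary asserting $\P^N(\tau=+\infty)>0\iff\P^1(\tau=+\infty)>0$ concludes.

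For the converse I would use the hypothesis $\nu(0)>0$ to show that, on survival, the process tends to infinity. Indeed $\P^k(Y_1=0)=\nu(0)^k>0$, so for each fixed level $k\ge 1$ every visit of $(Y_n)$ to $k$ is followed by extinction at the next step with a fixed positive probability; by the strong Markov property these successive attempts behave like independent trials, so any level visited infinitely often forces $\tau<+\infty$ almost surely. Consequently, on $\{\tau=+\infty\}$ each level is visited only finitely often, and since $Y_n\ge 1$ there, $Y_n\to+\infty$ almost surely on $\{\tau=+\infty\}$. Now choose $N$ large enough that $\P^N(\tau=+\infty)=1-\P^1(\tau<+\infty)^N>\frac12$, which is possible because $\P^1(\tau<+\infty)<1$. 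On $\{\tau=+\infty\}$ one has $Y_n\ge 2N$ for all large $n$, so the lower limit of $\P^N(Y_n\ge 2N)$ is at least $\P^N(\tau=+\infty)$ by Fatou's lemma; hence $\P^N(Y_T\ge 2N)>\frac12$ for every sufficiently large $T$, which provides the required pair $(N,T)$.

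The main obstacle is the first implication: the delicate point is to organize the coupling so that the blocks are genuinely independent and $(B_k)$ is an honest supercritical Galton--Watson process, rather than merely stochastically larger than one, which is exactly the renormalization mechanism that the paper is illustrating. By contrast the converse is comparatively routine once one notices that the hypothesis $\nu(0)>0$ is precisely what upgrades ``survives'' into ``grows without bound'', thereby turning the instantaneous requirement $Y_T\ge 2N$ into a consequence of divergence along the whole trajectory.
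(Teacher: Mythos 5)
Your proof is correct, and its overall architecture coincides with the paper's: for the direct implication you perform exactly the block renormalization that the paper isolates in its Lemma (groups of $N$ individuals observed every $T$ generations, each block spawning $2$ blocks with probability $p>\frac12$ and $0$ otherwise, giving a Galton--Watson process of mean $2p>1$ dominated from above by $Y_{Tk}/N$, then Theorem~\ref{surcritique}); the paper merely routes this through Corollary~\ref{souschaine} to reduce to one-step transitions before applying the lemma with $a=2$, which is a cosmetic difference. The converse also follows the same skeleton (show $Y_t\to+\infty$ on $\{\tau=+\infty\}$, then apply Fatou to $\1_{\{Y_t\ge 2N\}}$ and pick $T$), but your mechanism for the divergence step is genuinely different. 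The paper uses the closed-form martingale $\P^N(\tau<+\infty\mid\mathcal{F}_t)=\P^1(\tau<+\infty)^{Y_t}$, invoking $\nu(0)>0$ only to guarantee $\P^1(\tau<+\infty)>0$ so that one can take logarithms; martingale convergence then forces $Y_t\to+\infty$ on survival. You instead use $\nu(0)>0$ to argue that every state $k\ge 1$ leads to the absorbing state $0$ with probability $\nu(0)^k>0$, hence is transient, so on survival every finite level is visited only finitely often and $Y_t\to+\infty$. Your version is slightly less slick here but more robust: it does not require the exact product formula for the extinction probability (a consequence of the branching property via Theorem~\ref{galtonindep}) and would survive in models where that formula is unavailable, which is in the spirit of the ``second proof'' philosophy the paper advocates. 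One small point of care in your transience argument: the phrase ``independent trials'' should be backed by the strong Markov property together with a conditional Borel--Cantelli (or simply by the standard fact that a state leading to an absorbing state it cannot return from is transient), but this is routine and does not constitute a gap.
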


The event $\{Y_T\ge 2N\}$ only depends on what happens in a finite time box. Thus, it can be considered to be a local event, which will be useful to get some continuity with respect to the parameters of the model. \\

Before starting the proof, let us give the main ideas:
\begin{itemize}
\item For the direct implication, the idea is to compare the chain with a supercritical  Galton--Watson process, then conclude with the help of Theorem~\ref{surcritique}.
\item The reverse implication is quite simple, because one essentially has to prove that the number of particles explodes as soon as the process survives.
  However, it must be kept in mind that if the local event is more complicated, this part will actually be the most difficult one.
\end{itemize}

\begin{lemme}
 If there exist $a>0$ and $N\ge 1$ such that $a\P^N(Y_1\ge aN)>1$, then $\P^1(\tau=+\infty)>0$.
\end{lemme}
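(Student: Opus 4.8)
The plan is to observe that the hypothesis is, despite appearances, nothing but a lower bound on the fertility $m$ in disguise; once that is recognised, Theorem~\ref{surcritique} applies verbatim and there is essentially nothing left to do.

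First I would start from the random variable $Y_1$ under $\P^N$. Since $Y_1$ is nonnegative and $\{Y_1\ge aN\}$ is one of the events contributing to its expectation, the Markov inequality gives
\[
\E^N[Y_1]\ \ge\ \E^N\!\big[Y_1\,\1_{Y_1\ge aN}\big]\ \ge\ aN\,\P^N(Y_1\ge aN).
\]
On the other hand, \eqref{puissance} applied under $\P^N$ (so that $Y_0=N$) yields $\E^N[Y_1]=mN$. Dividing the displayed inequality by $N$ therefore gives $m\ge a\,\P^N(Y_1\ge aN)$, and the hypothesis $a\,\P^N(Y_1\ge aN)>1$ forces $m>1$. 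The argument is moreover insensitive to finiteness: if $m=+\infty$ the conclusion $m>1$ is immediate, and otherwise the Markov step is valid as written. Having established $m>1$, I would simply invoke Theorem~\ref{surcritique} to obtain $\P^1(\tau=+\infty)>0$, which is exactly the claim.

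The one genuinely substantive point is recognising that the local, directly checkable quantity $a\,\P^N(Y_1\ge aN)$ is a bona fide lower bound for the mean number of offspring; everything else is a one-line computation. For comparison, I briefly considered a more hands-on route in the renormalisation spirit, namely treating the $N$ initial particles as a single super--individual and comparing $(Y_n)$ with the Galton--Watson process that counts blocks of size $N$. The obstacle there is the flooring: a block spawns $\lfloor Y_1/N\rfloor$ child blocks, whose mean is only bounded below by $\lfloor a\rfloor\,\P^N(Y_1\ge aN)$, and for $a\in(1,2)$ one has $\lfloor a\rfloor=1$, so this lower bound can fall below $1$ and the block process may be subcritical even though $a\,\P^N(Y_1\ge aN)>1$ and the true process survives. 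Circumventing this would require carefully retaining the leftover particles across generations, which is exactly the complication that the expectation computation sidesteps — and this is why I would favour the Markov argument as the cleanest proof of the lemma.
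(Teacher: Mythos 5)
Your proof is correct, and it is a genuinely different route from the paper's. Under $\P^N$ one has $\E^N[Y_1]=mN$ by \eqref{puissance}, and the Markov-type bound $\E^N[Y_1]\ge \E^N[Y_1\1_{\{Y_1\ge aN\}}]\ge aN\,\P^N(Y_1\ge aN)$ indeed forces $m>1$ (trivially so if $m=+\infty$), after which Theorem~\ref{surcritique} gives the conclusion. The paper instead proves the lemma by the coupling you only sketched as an alternative: it groups particles into blocks of $N$, calls a block successful when its $N$ members produce at least $aN$ offspring (probability $q=\P^N(Y_1\ge aN)$), and shows by induction that $Y_n\ge NM_n$ where $(M_n)$ is an auxiliary Galton--Watson process with offspring law $(1-q)\delta_0+q\delta_a$; this reference process has fertility $aq>1$, survives by Theorem~\ref{surcritique}, and domination transfers survival to $(Y_n)$. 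Note that the paper's construction avoids the flooring loss you worry about precisely by awarding each successful block exactly $a$ children rather than $\lfloor Y_1/N\rfloor$ of them, discarding the surplus. What each approach buys: yours is shorter and identifies the hypothesis as a disguised lower bound on the fertility of the process itself, but it works only because the process under study happens to admit a known sufficient condition for survival in terms of its own parameter; the paper deliberately routes the argument through domination of a separate, elementary reference process, because that static renormalization step is the one that survives the passage to models where no such criterion is available --- this is exactly the mechanism reused for the cooperative model in Lemma~\ref{leqvencore}, where the analogue of your shortcut does not exist.
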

\begin{proof}
Let $X_i^n$ be i.i.d. with $\nu$ as common distribution.
Let $M_0=1$, $Y_0=N$, and then 
$$\forall n\ge 0\quad Y_{n+1}=\sum_{1\le i\le Y_n}X_i^n\text{ and }M_{n+1}=\sum_{i=1}^{M_n} aB_i^n,$$
with $B_i^n=\1_{\{X^n_{(i-1)N+1}+\dots X^n_{iN}\ge aN\}}$.\\
We prove by natural induction that $Y_n\ge NM_n$ for each $n\ge 0$. Indeed, if $Y_n\ge NM_n$, it follows that
\begin{align*}
  Y_{n+1}=\sum_{1\le i\le Y_n}X_i^n\ge \sum_{1\le i\le NM_n}X_i^n&=\sum_{i=1}^{M_n}(X^n_{(i-1)N+1}+\dots X^n_{iN})\\&\ge \sum_{i=1}^{M_n} aNB_i^n=NM_{n+1}.
  \end{align*}
We note that $(M_n)$ is a Galton--Watson process, and its fertility is given by\\ $m=\E[aB_i^n]=a\P^N(Y_1\ge aN)>1$, then it may survive by Theorem~\ref{surcritique}.
Since  $Y_n\ge NM_n$, the process $(Y_n)$ may survive too.
\end{proof}
Note that the proof of the lemma relies on a coupling argument: we make live on the same space $(Y_n)_{n\ge 0}$ and a Galton--Watson process with offspring distribution $(1-q)\delta_0+q\delta_{a}$, where $q=\P^N(Y_1\ge aN)$.\\
This step can be seen as a static renormalization: with the  help of the local events $\{X^n_{(i-1)N+1}+\dots X^n_{iN}\ge aN\}$, we build a growth process involving Bernoulli variables, in such a way that
\begin{itemize}
\item The process using Bernoulli variables is known to be able to survive;
\item  The process using Bernoulli variables is dominated by the process that we study.
  \end{itemize}

\begin{proof}[Proof of Theorem~\ref{equivalence}]
By corollary~\ref{souschaine}, $(Y_{nT})_{n\ge 0}$ is a Galton--Watson process. So we can apply the Lemma with $a=2$: $(Y_{nT})_{n\ge 0}$ may survive, thus  $(Y_{n})_{n\ge 0}$ may survive also.

Conversely, let us suppose that $\nu(0)>0$, and $\P^1(\tau<+\infty)<1$.\\
Since $\P^N(\tau<+\infty)=\P^1(\tau<+\infty)^N$, there exists $N$ with $\P^N(\tau<+\infty)<1/2$.

We have noted that $\P^N(\tau<+\infty|\mathcal{F}_t)=\P^1(\tau<+\infty)^{Y_t}$.\\
Since $\P^1(\tau<+\infty)\ge \P^1(Y_1=0)=\nu(0)>0$, we can write
$$Y_t=\frac{\log \P^N(\tau<+\infty|\mathcal{F}_t)}{\log \P^1(\tau<+\infty)}.$$
Now, the Martingale convergence Theorem ensures that
$$\E^N[\1_{\{\tau<+\infty\}}|\mathcal{F}_t]=\P^N(\tau<+\infty|\mathcal{F}_t)\to \1_{\{\tau<+\infty\}}\quad\P^N\text{ a.s.}$$ when $t$ tends to infinity. \\
Particularly, on the event $\{\tau=+\infty\}$, 
$\P^N(\tau<+\infty|\mathcal{F}_t)$ almost surely tends to  $0$ and
$Y_t$ almost surely tends to infinity. Therefore, the following inequality holds $\P^N$-almost surely:
$$\1_{\{\tau=+\infty\}}\le \miniop{}{\liminf}{t\to +\infty}\1_{\{Y_t\ge 2N\}}.$$
With the Fatou Lemma, it follows that
$$\P^N(\tau=+\infty)=\E^N(\1_{\{\tau=+\infty\}})\le \miniop{}{\liminf}{t\to +\infty}\E^N[\1_{\{Y_t\ge 2N\}}]= \miniop{}{\liminf}{t\to +\infty}\P^N(Y_t\ge 2N).$$
Since $\P^N(\tau=+\infty)>1/2$, there exists $T$ such that $\P^N(Y_T\ge 2N)>1/2$.
\end{proof}

\subsection{The critical case}

\begin{theo}
  If $\nu(0)>0$ and $m=1$, then $\P^1(\tau=+\infty)=0$.
\end{theo}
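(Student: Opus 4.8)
The plan is to argue by contradiction, using that survival has been reduced to the \emph{local} event $\{Y_T\ge 2N\}$ in Theorem~\ref{equivalence}, together with the heuristic that a finite--horizon event of this kind should be insensitive to a small subcritical perturbation of the offspring law. So suppose $\P^1(\tau=+\infty)>0$. As $\nu(0)>0$, Theorem~\ref{equivalence} yields integers $N,T\ge 1$ with $\P^N(Y_T\ge 2N)>\tfrac12$.

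I would then push $\nu$ just below criticality by setting $\nu_\epsilon=(1-\epsilon)\nu+\epsilon\,\delta_0$ for a small $\epsilon>0$. Its mean equals $(1-\epsilon)m=1-\epsilon<1$ and $\nu_\epsilon(0)\ge\epsilon>0$, so $\nu_\epsilon$ falls under the hypotheses of both Theorem~\ref{mort} and Theorem~\ref{equivalence}. Writing $\P_\epsilon^i$ for the law of the process started from $i$ with offspring distribution $\nu_\epsilon$, the aim is to check that $\P_\epsilon^N(Y_T\ge 2N)>\tfrac12$ persists for $\epsilon$ small. Once this is granted, Theorem~\ref{equivalence} forces $\P_\epsilon^1(\tau=+\infty)>0$, while Theorem~\ref{mort}, applied with fertility $1-\epsilon<1$, gives $\P_\epsilon^1(\tau=+\infty)=0$; this contradiction proves the statement.

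The crux, and the step I expect to resist the most, is precisely this persistence estimate. The difficulty is that, although $\{Y_T\ge 2N\}$ is decided within the first $T$ generations, it involves $S=Y_0+\dots+Y_{T-1}$ offspring variables, a random and a priori unbounded number, so one cannot simply invoke continuity of the probability on a fixed finite product space. I would bypass this by a coupling: produce $\nu_\epsilon$ from a $\nu$--draw that is independently reset to $0$ with probability $\epsilon$, and drive both processes by the same $\nu$--draws. On the event that none of the $S$ relevant individuals is reset, the two genealogies agree up to generation $T$. Conditioning first on $\{S\le K\}$ and choosing $K$ large enough that $\P^N(Y_T\ge 2N,\,S\le K)>\tfrac12$ -- legitimate since $S<+\infty$ almost surely -- one obtains
$$\P_\epsilon^N(Y_T\ge 2N)\ge(1-\epsilon)^{K}\,\P^N(Y_T\ge 2N,\,S\le K),$$
whose right--hand side exceeds $\tfrac12$ as soon as $\epsilon$ is small, closing the argument.
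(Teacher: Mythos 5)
Your argument is correct and is essentially the paper's own (second) proof: your mixture $\nu_\epsilon=(1-\epsilon)\nu+\epsilon\,\delta_0$ is exactly the paper's Bernoulli thinning $B_i^nX_i^n$ with $p=1-\epsilon$, and your conditioning on the total progeny $S\le K$ plays the same role as the paper's bound $\max_{0\le i\le T}Y_i\le M$, yielding $(1-\epsilon)^{K}$ in place of $p^{TM}$. For completeness, the paper also gives a one-line first proof --- Markov's inequality shows $\P^N(Y_T\ge 2N)\le\E^N[Y_T]/(2N)=\tfrac12$ for all $N,T$, so the local criterion of Theorem~\ref{equivalence} can never hold at $m=1$ --- but it singles out your perturbative route as the more robust one.
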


\begin{proof}[First proof]
It is sufficient to note that for every $N,T\ge 1$, we have $$\P^N(Y_T\ge 2N)\le\frac{\E^N(Y_T)}{2N}=\frac{N}{2N}=\frac12,$$
then apply the converse part in Theorem~\ref{equivalence}.
\end{proof}
We now present another line of proof, somewhat longer, but also more robust.
It was used in Garet--Marchand~\cite{GM-BRW} and Gantert--Junk~\cite{Gantert} for the study of some branching random walks.

The first proof is not robust because it exploits the fact that we exactly know how to characterize the critical parameter for survival.
However, in many growth models, the critical parameter can not be given explicitly.
The idea is then: having shown that survival is characterized by the fact that a local event has a fairly high probability, we reason by contradiction and suppose that there is survival at the critical point for a certain parameter.
Then, with a slight modification of the local event, we can, by continuity, exhibit a model of the same family that is a little weaker, for which the local event still has a probability that is large enough to ensure survival, but which must nevertheless die because its parameter has become subcritical.

\begin{proof}[Second proof]
By contradiction, let us assume that we have $\nu(0)>0$, $m=1$ and also $\P^1(\tau=+\infty)>0$.

By Theorem~\ref{equivalence} (converse implication), one can choose $n$ and $T$ such that  $\P^N(Y_T\ge 2N)>\frac12$.

The idea is to provide a coupling with a subcritical process.
Let $(X_i^n)_{i,j\ge 1}$, $(B_i^n)_{i,j\ge 1}$ be independent variables with $X_i^n\sim \nu$,  and the $(B_i^n)_{i,j\ge 1}$'s are  Bernoulli with parameter $p$. Define $Y_0=N$, $Y^p_0=N$, then 

$$\forall n\ge 0\quad Y_{n+1}=\sum_{1\le i\le Y_n}X_i^n\text{ and }Y^p_{n+1}=\sum_{1\le i\le Y^p_n}B_i^n X_i^n.$$

By monotonicity,
$$\miniop{}{\lim}{M\to +\infty}\P^N( \max(Y_i,0\le i\le T)\le M,Y_T\ge 2N)=\P^N( Y_T\ge 2N)>1/2,$$
so there exists $M$ such that $\P( \max(Y_i,0\le i\le T)\le M,Y_T\ge 2N)>1/2$.
We have then
\begin{align*}
\P(Y^p_T\ge 2N)&\ge \P(Y_T\ge 2N,\forall i\le T\quad Y^p_i=Y_i)\\
& \ge \P\left( \begin{array}{l}\max(Y_i,0\le i\le T)\le M,Y_T\ge 2N,\\\forall (t,i)\in\{0,\dots, T-1\}\times\{1,\dots,M\} \quad B_i^t=1\end{array}\right)\\&=\P( \max(Y_i,0\le i\le T)\le M,Y_T\ge 2N)p^{TM}
\end{align*}
Taking $p<1$ large enough, we have $$\P( \max(Y_i,0\le i\le T)\le M,Y_T\ge 2N)p^{TM}>1/2,$$ so $\P(Y^p_T\ge 2N)>1/2$.
But $(Y^p_t)$ is a Galton--Watson process with offspring distribution $B_1^1X_1^1$ and initial distribution $\delta_N$, so by Theorem~\ref{equivalence} (direct implication), this Galton--Watson process may survive.
However $$\E[B_1^1X_1^1]=\E[B_1^1]\E[X_1^1]=pm=p<1,$$ so by Theorem~\ref{mort}, the process can not survive. This is a contradiction.

\end{proof}

The Galton-Watson process has the particularity that the survival domain can be described explicitly.
This is obviously very practical, but it may cast doubt on the generality of the proof technique we are presenting.

In fact, this technique is more often applied to models where the critical value is unknown, the most emblematic being the contact process or directed percolation.
But in these models, proving the existence of a high probability for the local event in question is often quite technical, requiring many steps.

We therefore present a simpler model, which allows us to demonstrate the power of the method in a model that is not completely solvable, and which we believe is sufficiently rich to be of interest.

\section{Application to a cooperative model}

We describe a cooperative model with two species by a Markov chain $((X_n,Y_n)_{n\ge 0})$ with values in $\N^2$, given by the conditional laws

\begin{align*}
\mathcal{L}((X_{n+1},Y_{n+1})|\mathcal{F}_n)&=\mu_{(X_n,Y_n)}\text{ with }\mu_{(x,y)}=\mu_{1,1}^{*(x+y)}*\mu_{2,1}^{\min(x,y)} \otimes \mu_{2,1}^ {*(x+y)}*\mu_{1,2}^{\min(x,y)},
\end{align*}
where $\mathcal{F}_n=\sigma((X_k,Y_k)_{0\le k\le n})$. In other words, we have the representations

\begin {align*}X_{n+1}=\sum_{k=1}^{X_n+Y_n} W^{(1,1)}_{k+1}+\sum_{k=1}^{\min(X_n,Y_n)} W^{(2,1)}_{k+1}\\
Y_{n+1}=\sum_{k=1}^{X_n+Y_n} W^{(1,2)}_ {k+1}+\sum_{k=1}^{\min(X_n,Y_n)} W^{(2,2)}_{k+1},\\
\end{align*}
where the $W^{(i,j)}_{n}$ are independent variables, $\mu_{i,j}$ being the distribution of $W^{(i,j)_n}$.

This is a model with two types of individuals, each of which can reproduce asexually, with offspring potentially of both types, plus a sexual component involving faithful pairing of individuals of both types.

To simplify matters, we assumed that the distribution of an individual's offspring through asexual reproduction did not depend on its type.

Let $E$ be the set of quadruplets
$\ mu=(\mu_{i,j})_{1\le i,j\le 2}\in \mathcal{P}$, where each $\mu_{i,j}$ is a probability measure on $\N$.

In what follows, the letter $\mu$ denotes a quadruplet of $E$ and we also denote by $\P^{(x,y)} _{\mu}$ the law of the process starting from state $(x,y)$ and subject to the dynamics based on $\mu=(\mu_{i,j})_{1\le i,j\le 2}\in E$. 

Let us recall some classic definitions.
 
If $X$ is a finite or countable set, a function $f$ from $\R^X$ to $\R$ is said to be increasing in the product order if
\[(\forall k\in X;\quad x_k\le y_k) \Longrightarrow f(x)\le f(y).\]
If $\alpha$ and $\beta$ are probability measures on $\R^X$, we say that $\beta$ stochastically dominates $\alpha$ and we write
$\alpha\lestoch\beta$ if for every function $f : \mathbb{R}^X \to \mathbb{R}$ increasing in the product sense, we have
\[
\int_{\mathbb {R}^X} f \, \mathrm{d}\alpha \;\le\; 
\int_{\mathbb{R}^X} f \, \mathrm{d}\beta.
\]

We can then note that the process is superadditive.
Specifically, if $x,x',y,y'$ are any natural numbers, we have the inequality on the transition laws
\begin{align*}
\mu_{(x,y)}*\mu_{(x',y')}& =\mu_{1,1}^{*(x+y)}*\mu_{2,1}^{\min(x,y)}*
\mu_{1,1}^{*(x'+y')}*\mu_{2,1}^{\min(x',y')} \\
&\otimes \mu_{2,1}^{*(x+y)}*\mu_{1,2}^{\min(x,y)}* \mu_{2,1}^{*(x'+y')}*\mu_{1,2}^ {\min(x',y')}.\\
& =\mu_{1,1}^{*(x+y+x'+y')}*\mu_{2,1}^{\min(x,y)+\min(x',y')}\\
&\otimes \mu_{2,1}^{*(x+y+x'+y')}*\mu_{1,2}^{\min(x,y)+\min(x',y')}.\\
&\lestoch \mu_{1,1}^{*(x+y+x'+y')}*\mu_{2,1}^{\min(x+x',y+y')}\\
&\otimes \mu_{2,1}^{*(x+y+x'+y')}*\mu_{1,2}^{\min(x+x',y+y')}=\mu_{x+x',y+y'}.\\
\end{align*}

This inequality can be classically transferred to process laws: whatever
the measures $\mu=(\mu_{i,j})_{1\le i,j\le 2}$ and integers $x,x',y,y'$, we have 
\begin {align}
\label{inegloi}
\P_{\mu}^{(x,y)}*\P_{\mu}^{(x',y')}\lestoch \P_{\mu}^{(x+y',y+y')}.
\end{align}

In particular, if $x\ge x'$ and $y\ge y'$, then $\P_\mu^{(x,y)}\gestoch \P_\mu^{(x',y')}$.

The model we are studying is a special case of the Galton-Watson multitype bisexual branching process, as defined by Fritsch, Villemonais, and Zalduendo~\cite{zbMATH07967668}.~\footnote{Following their nomenclature, we work with the mating function $\xi(x,y)=(x+y,\min(x,y))$.}
However, the families we propose to study in more detail do not fall within the scope of their main results, which require strong irreducibility assumptions.

\subsection{Continuity results}

In what follows, we set \[S_n=X_n+Y_n,\quad Z_n=\inf(X_n,Y_n)\text{ and }\tau=\inf\{n\ge 0;Z_n=0\}\].

We begin by deducing from~\eqref{inegloi} a lemma that will be very useful later on:
\begin{lemme}\label{grandpas}
Let $p\in [0,1]$, $x,y,k$ be natural numbers, and $N$ and $T$ be nonzero natural numbers.
Then
\begin{align*} \P_{\mu}^{(x,y)}(Z_T\ge kN)&\ge \gamma^{*a}([k,+\infty[),
\end{align*}
where $a=\min(\floor{\frac{x}N},\floor{y}N)$ and $\gamma$
is the distribution of $\floor{\frac{Z_{T}}{N}}$ under $\P_{\mu}^{(N,N)}$.
\end{lemme}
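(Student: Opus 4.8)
The plan is to combine the super-additivity inequality \eqref{inegloi} with the monotonicity remark that follows it and with one elementary pointwise estimate. Set $a=\min(\floor{x/N},\floor{y/N})$, so that $x\ge aN$ and $y\ge aN$. First I would reduce the starting configuration: by monotonicity, since $x\ge aN$ and $y\ge aN$, one has $\P_{p,q}^{(x,y)}\gestoch\P_{p,q}^{(aN,aN)}$. Then, iterating the stochastic inequality \eqref{inegloi} $a-1$ times (recalling that convolution with a fixed law preserves stochastic order), I would obtain
$$\P_{p,q}^{(aN,aN)}\gestoch\left(\P_{p,q}^{(N,N)}\right)^{*a},$$
where the $a$-fold convolution is the law of the coordinatewise sum of $a$ independent copies of the process issued from $(N,N)$.

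Next I would observe that $\{Z_T\ge kN\}$ is a non-decreasing event: $Z_T=\min(X_T,Y_T)$ depends only on the time-$T$ coordinates and is non-decreasing in each of them, so $\1_{\{Z_T\ge kN\}}$ is a non-decreasing function of the whole trajectory. Feeding this indicator into the stochastic domination above yields
$$\P_{p,q}^{(x,y)}(Z_T\ge kN)\ge\left(\P_{p,q}^{(N,N)}\right)^{*a}(Z_T\ge kN).$$

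It then remains to bound the right-hand side below by $\gamma^{*a}([k,+\infty[)$. Let $(X^{(j)},Y^{(j)})_{1\le j\le a}$ be independent copies of the process started at $(N,N)$ and put $Z_T^{(j)}=\min(X_T^{(j)},Y_T^{(j)})$. The key pointwise inequality is the super-additivity of the minimum,
$$\min\Bigl(\sum_{j=1}^a X_T^{(j)},\sum_{j=1}^a Y_T^{(j)}\Bigr)\ge\sum_{j=1}^a\min\bigl(X_T^{(j)},Y_T^{(j)}\bigr)=\sum_{j=1}^a Z_T^{(j)},$$
so that the variable $Z_T$ of the summed process dominates $\sum_j Z_T^{(j)}$. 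Since $Z_T^{(j)}\ge N\floor{Z_T^{(j)}/N}$, the event $\{\sum_j\floor{Z_T^{(j)}/N}\ge k\}$ is contained in $\{\sum_j Z_T^{(j)}\ge kN\}$, hence in $\{Z_T\ge kN\}$ for the summed process. As the $\floor{Z_T^{(j)}/N}$ are i.i.d. with law $\gamma$, their sum has law $\gamma^{*a}$, and taking probabilities gives the claim. (The case $k=0$ is trivial, both sides being $1$.)

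The only genuinely delicate point is the third paragraph: the super-additivity of the minimum together with the floor bookkeeping, where one must pass from ``each copy contributes at least $N\floor{Z_T^{(j)}/N}$ to both total coordinates'' to the uniform lower bound $kN$ on the minimum of the two totals. Everything else is a mechanical iteration of \eqref{inegloi} and a direct appeal to the definition of stochastic domination against the non-decreasing indicator $\1_{\{Z_T\ge kN\}}$.
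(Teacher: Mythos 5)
Your proof is correct and follows essentially the same route as the paper: dominate the process from $(x,y)$ by the coordinatewise sum of $a$ independent copies started from $(N,N)$ via iterated use of \eqref{inegloi}, then apply the super-additivity of the minimum and the bound $\tilde Z_j\ge N\lfloor \tilde Z_j/N\rfloor$ to land on $\gamma^{*a}([k,+\infty[)$. You merely spell out more explicitly the intermediate steps (the monotone reduction to $(aN,aN)$ and the fact that $\{Z_T\ge kN\}$ is a non-decreasing event) that the paper compresses into its first displayed inequality.
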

\begin {proof}
Let $(\tilde{X}_1,\tilde{Y_1}),\dots, (\tilde{X}_a,\tilde{Y_a})$ be $a$ independent random vectors following the distribution of $(X_T,Y_T)$ under $\P_{\mu}^{(N,N)}$.
Setting $\tilde{Z}_k=\min(\tilde{X} _k,\tilde{Y}_k)$, we have
\begin{align*}
\P_{\mu}^{(x,y)}(Z_T\ge kN)&\ge \P(\tilde{X}_1+\dots \tilde{X}_a\ge kN,\tilde{Y}_1+\dots \tilde{Y}_a\ge kN)\\
&\ge \P(\tilde{Z}_1+\dots \tilde {Z}_a\ge kN)\\
&\ge \P( \floor{\frac{\tilde{Z}_1}N}+\dots \floor{\frac{\tilde{Z}_a}N}\ge k) =\gamma^ {*a}([k,+\infty)).
\end{align*}
Thus
\begin{align*}
\P_{\mu}^{(N,N)}(\floor{\frac{Z_{(n+1) T}}{N}}\ge k|\mathcal{F}_{nT})&= \P_{\mu}^{(X_{nT},Y_{nT})}(\floor{\frac{Z_{T}}{N}}\ge k|\mathcal{F}_{nT})\\ &\ge \gamma^{* \floor{\frac{Z_{nT}}{N}} }([k,+\infty))
\end{align*} 

\end{proof}

We can now state the locality lemma, analogous to Theorem~\ref {equivalence}.
Let's start with a definition.
\begin{definition}
Let $\mu\in E$. We say that $\mu$ has the locality property if we have 
equivalence between
\label{leqvencore}
\begin{itemize}
\item $\P_{\mu}^{(1,1)}(\tau=+\infty)>0$.
\item $\exists N\ge 1,T\ge 1\quad \E_{\mu}^{(N,N)}(\floor{\frac{Z_T}N})>1$.
\end{itemize}
We also say that a subset $F$ of $E$ has the locality property if all its elements have the locality property. 
\end{definition}

\begin{lemme}
\label{lemmebienabstrait}
Let $X$ be a sequential topological space, $(\Omega,{\mathcal{F}})$, a measure space.
Let $\mathcal{M}(\Omega,\mathcal{F})$ be the set of probability measures on $(\Omega,{\mathcal{F}})$. We equip $\mathcal{M}(\Omega,\mathcal{F})$ with the total variation distance:
\[d_{VT}(\mu,\nu)=2\sup_{A\in\mathcal{F}}|\mu(A)-\nu(A)|.\]
Let $D$ be a countable set.
We assume that for all $k\in D$, the mapping 
\begin{align*}
X&\to \mathcal{M}(\Omega,\mathcal{F})\\
x &\mapsto \nu^k_x
\end{align*}
is continuous (for the topology of the total variation distance).
We set

\[(\nu!\mu)_x=\sum_{k\in D} \mu(k)\nu^k_x ,\]

Then, the mapping
\begin{align*}
X\times \mathcal{M}(D,\mathcal{P}(D)) &\to \mathcal{M} (\Omega,\mathcal{F})\\
(x,\mu)&\mapsto (\nu!\mu)_x
\end{align*}
is continuous.
\end{lemme}
\begin{proof}
Suppose that $\mu_n$ converges to $\mu$ and that $(x_n)$ converges to $x$.
Let $A\in\mathcal{F}$.
We have
$$\begin {aligned}
(\nu!\mu)_x(A)-(\nu!\mu_n)_{x_n}(A)&=\sum_{k\in D} \mu(k)\left(\nu_x^k(A)-\nu_{x_n}^k(A)\right)\\
&\quad + \sum_{k\in D}\left(\mu(k)-\mu_n(k)\right) \nu_{x_n}^k(A)
\end{aligned}$$
from which
$$\begin{aligned}
|(\nu!\mu)_x(A)-(\nu!\mu_n)_{x_n}(A)|&\le \frac12\sum_{k\in D} \mu 
(k) d_{VT}(\nu_x^k,\nu_{x_n}^k)\\
&\quad + \sum_{k\in D}\left|\mu(k)-\mu_n(k)\right|
\end {aligned},$$
then

\[d_{VT}((\nu!\mu)_x,(\nu!\mu_n)_{x_n})\le \sum_{k\in D} \mu(k) d_{VT}(\nu_x^k,\nu_{x_n}^k)+2d_{VT}(\mu,\mu_n)\]
The first term tends towards $0$ by dominated convergence, the second thanks to Scheffé's lemma.
Thus, $(\nu!\mu_n)_{x_n}$ converges towards $(\nu!\mu)_{x}$. As the space is sequential, this shows that the application $(x,\mu)\mapsto (\nu!\mu)_x$ is continuous.
\end{proof}

We deduce the continuity theorem:
\begin{theo}
\label{theocontinuite}
Let $F$ be a subset of $E$ that has 
the locality property. Then
$$S=\{\mu\in F; \P^{(1,1)}_{\mu}(\tau=+\infty)>0\}.$$
is an open set in $F$.
\end{theo}

\begin{proof}
According to the definition~\ref{leqvencore} of locality, we have 
\begin{align}
\label{egalitecont}
S=\miniop{}{\cup}{N\ge 1,T\ge 1} \{\mu\in F; \E_{\mu}^{(N,N)}(\floor{\frac{Z_T}N})>1\}.
\end{align}

To conclude, it suffices to prove that for $N,T\ge 1$, $\mu\mapsto \E_{\mu}^{(N,N)}(\floor{\frac{Z_T}N})$ is a semi-continuous function from below.

We begin by showing that for all $n$, the distribution of $(X_n,Y_n)$ depends continuously on $\mu$.
The set $E$ where $\mu$ lives is metrizable; it is therefore sequential and we can apply Lemma~\ref{lemmebienabstrait}.

For $\mu\in E$, we denote by $\mu_i$ the reproduction law of a unit of type $i$ (this is the distribution of $(W^{(1,i)},W^{(2,i)})$).

We then set, for $(i,j)\in\N^2$: $\displaystyle \nu_{\mu}^{i,j}=\mu_1^{*i}*\mu_2^{*j}$.

It is easy to see that whatever $i$ and $j$ may be, $\mu\mapsto \nu_{\mu}^{i,j}$ is continuous.

Let $L^\mu_n$ be the distribution of $(X_n,Y_n)$ under $\P^{(N,N)}_{\mu}$.

We have the recurrence formula:

\[L^{\mu}_ {n+1} =\nu_{\mu}!(L^{\mu}_n)_{\xi},\]

where $(L^{\mu}_n)_ {\xi}$ denotes the image distribution of $(L^{\mu}_n)$ by the pairing function $\xi(x,y)=(x+y,\min(x,y))$. With Lemma~\ref{lemmebienabstrait}, this allows us to establish by recurrence that
$L^\mu_n$ depends continuously on $\mu$.

If we set \(\displaystyle F_{N,T,k}(\mu)=\sum_{1\le i,j\le k} L^{\mu}_T(\{(i,j)\}) \floor{\frac{\min(i,j)}{N}},\)

it is clear that $\mu\mapsto F_{N,T,k}(\mu)$ is continuous, from which we deduce that $S$ is an open set in $F$, since
\[S=\miniop{}{\cup}{N\ge 1,T\ge 1,k\ge 1} \{\mu\in F; F_{N,T}(\mu)>1\}.\]
\end{proof}

\subsection{Specific models}

We now work with the additional hypothesis
\begin{align}
\mu_{2,1}=\delta_0
\end{align}
which means that the union of two individuals of different types can never give rise to an element of type 1. The recurrence thus takes the form

\[\begin{aligned}X_{n+1}&=\sum_{k=1}^{X_n+Y_n} W^{(1,1)}_k\\
Y_{n+1}&=\sum_{k=1}^ {X_n+Y_n} W^{(1,2)}_k+\sum_{k=1}^{\min(X_n,Y_n)} W^{(2,2)}_k\\
\end{aligned}\]

Since the degree of generality is still too high for a detailed analysis, we focus on two specific families:

\begin{itemize}
\item Family A: $\mu_{1,2}=\mu_{2,1}=\delta_0$; $\mu_{2,2}(0)\ne 0$;
\item Family B: $\mu_{1,2},\mu_{2,2}\ne \delta_0$, $\mu_{2,1}=\delta_0$, $\mu_{1,1}(0)>0$;$\mu_{1,2}(0)>0$.
\end{itemize} 
\vspace{0.3cm}

In family A, the first component of the pair is the number of type 1 elements, which are produced asexually by representatives of both types; while the second component, type 2 elements, are produced by an encounter between type 1 elements and type 2 elements.

Note that if at a given moment there are no more type 1 particles or no more type $2$ particles, the type $2$ particles disappear without any possibility of reappearing.

On the other hand, if type $2$ particles disappear and type $1$ particles remain, the process of type $1$ particles then behaves like a Galton-Watson process with reproduction law $\mu_{2,2}$: their survival is possible if and only if the average number of descendants $\int x\ d\mu_{2,2} $ exceeds 1 (we have excluded the case where $\mu_{2,2}=\delta_0$).

In family B, asexual reproduction gives rise to both types, so both types are guaranteed to survive simultaneously. However, we expect the second type to be observed more frequently than the first.

\subsection{Locality of models}

\begin{lemme}
\label{lalocalite}
The elements $\mu$ of family $A$ that satisfy
$\mu_{1,1}(0)>0$ and the elements of family $B$ have the locality property.

\end{lemme}

\begin{proof}
Suppose $\P_{\mu}^{(1,1)}(\tau=+\infty)>0$.
From~\eqref{inegloi}, we deduce
that if $\min(x,y)\ge N$, then
\begin{align}
\label{sibeaucoup}
\P^{(x,y)}(\tau<+\infty)&\le \P^{(1,1)}(\tau<+\infty)^N
\end{align}

Then, thanks to~\eqref{sibeaucoup}, we can find $N$ such that $\P_{\mu}^{(N,N)}(\tau=+\infty)>\frac34$.

\begin{itemize}
\item Case A:
We have
\begin{align*}
\P_{\mu}^{Z(N,N)}(\tau<+\infty|\mathcal{F}_n)&\ge \P_{\mu}^{(N,N)}(Y_{n+1}=0|\mathcal{F}_n)=\begin{cases}1 &\text{ if }Z_n=0\\ (\mu_{2,2}(0))^{Z_n}& \text{ otherwise.}\end{cases}
\end{align*}
On the event $\{\tau=\infty\}$, $\P_{\mu}^{(N,N)}(\tau<+\infty|\mathcal{F}_n)$ converges almost surely to $0$, so $Z_n$ tends $\P_{\mu}^{(N,N)}$ almost surely to infinity. Thus, $\1_{\{\tau=+\infty,Z_n<2N\}}$ tends almost surely to $0$, by dominated convergence, $\P_{\mu}^{(N,N)}(\tau=+\infty,Z_n<2N)$ tends to $0$, which means
that $\P_{\mu}^{(N,N)}(Z_T\ge 2N)>\frac12$ for sufficiently large $T$.
We can deduce that 
\begin{align*}
\E^{(N,N)}_{\mu}(\floor{\frac{Z_{T}}{N}})&\ge \E^{(N,N)}_{\mu}(2\1_{\{Z_T\ge 2N\}})=2\P^{(N,N)}_{\mu}(Z_T\ge 2N)>1.
\end{align*}
\item Case B:
We have
\begin{align*}
\P_{\mu}^{(N,N)}(\tau<+\infty|\mathcal{F}_n)&\ge \P_{\mu}^{(N,N)}(Y_{n+1}=0|\mathcal{F}_n)=\mu_{1,2}(0)^{X_n+Y_n}\mu_{2,2} 
(0)^{\min(X_n,Y_n)}\\&\ge (\mu_{1,2}(0)\mu_{2,2}(0))^{X_n+Y_n}
\end{align*}
On the event $\{\tau=+\infty\}$, $\P_{\mu}^{(N,N)}(\tau<+\ infty|\mathcal{F}_n)$ converges almost surely to $0$, so $S_n=X_n+Y_n$ tends $\P_{\mu}^{(N,N)}$ almost surely to infinity. Thus, if $M$ is any natural number, $\1_{\{\tau=+\infty,S_n<M\}}$ tends almost surely to $0$, by dominated convergence, $\P_{\mu}^{(N,N)}(\tau=\infty,S_n<M)$ tends towards $0$, which means that we can choose $T$
such that $\P_{\mu}^{(N,N)}(S_{T-1}\ge M)>\frac34$ for $T$ sufficiently large.
Suppose that $M$ is chosen such that for $i\in\{1,2\}$, $\mu_{1,i}^{*M}([0,2N]) <1/8$.
We deduce that
 
\begin{align*}
\E^{(N,N)}_{\mu}(\floor{\frac{Z_{T}}{N}})&\ge \E^{(N,N)}_{\mu}(2\1_{\{Z_T\ge 2N\}})=2\P^{(N,N)}_{\mu}(Z_T\ge 2N).
\end{align*}
However,
\begin{align*}
\P^{(N,N)}_{\mu}(Z_T\ge 2N|\mathcal{F}_{T-1})&\ge 1-\sum_{i=1}^2\P_\mu\left(\sum_{k=1}^{S_{T-1}} W_{T}^{1,i}\le 2N\Big|\mathcal{F}_{T-1}\right)\\
&\ge 1-\sum_{i=1}^2 \mu_{1,i}^{*S_{T-1}}([0,2N])\\
&\ge \frac34\1_{\{S_{T-1}\ge M\}}
\end{align*}

\end{itemize}

For the converse, there is no need to treat the two models separately.
Now suppose that there exist integers $N$ and $T$ such that
$\E^{(N,N)}_{\mu}(\floor{\frac{Z_{T}}{N}})>1$.

We will show that the process $(\floor{\frac{Z_{nT}}{N}})_{n\ge 0}$ stochastically dominates a supercritical Galton-Watson process.

Let $\gamma$ be the distribution of $\floor{\frac{Z_{T}}{N}}$ under $\P_{\mu}^{(N,N)}$.

Using Lemma~\ref{grandpas} and the Markov property, we have for all $k\ge 0$ and all natural numbers $n$:

\begin{align*}
\P_{\mu}^{(N,N)}(\floor{\frac{Z_{(n+1)T}}{N}}\ge k|\mathcal{F}_{nT})&= \P_{\mu}^{(X_{nT},Y_{nT})}(\floor{\frac{Z_{T}}{N}}\ge k|\mathcal{F}_{nT})\\ &\ ge \gamma^{* \floor{\frac{Z_{nT}}{N}} }([k,+\infty))
\end{align*}

This shows that $(\floor{\frac{Z_{nT}}{N}})_{n\ge 0}$ stochastically dominates a Galton-Watson process with reproduction law $\gamma$, which is supercritical according to the condition on the expectation.
This implies that the process survives with strictly positive probability.
\end{proof}

\subsubsection{Parametric study of family A}

We set $Q=\E_\mu(W^{1,1}_1)=\int_{\R} x\ d\mu_{1,1}(x)$ and
$P=\E_\mu(W^{2,2}_1)=\int_{\R} x\ d\mu_{2,2}(x)$

\begin{lemme}
\label{paspossible}
In family A, if
$Q(1+P)<1$ or $P<1$, survival is impossible.
\end{lemme}
\begin{proof}
First, we have
\begin{align*}
\E_{\mu}[Z_{n+1}|\mathcal{F}_n]&\le \E_{\mu}[Y_{n+1}|\mathcal{F}_n]=P Z_n,
\end{align*}
so $\E_{\mu}[Z_n]\le P^n \E_{\mu}(Z_0)$ and with Borel-Cantelli's lemma,
$Z_n$ tends almost surely to $0$ for $P<1$.

We also have
\[\begin{aligned}
\E_\mu[X_{n+1}|\mathcal{F}_n]&=(X_n+Y_n)\E_\mu(W_1^{1,1})=Q(X_n+Y_n)\\
\E_\mu[Y_{n+1}|\mathcal{F}_n]&=\min(X_n,Y_n)\E_\mu(W_1^{2,2})=P\min(X_n,Y_n)\le P X_n
\end{aligned}\]
Reintegrating, we obtain that
\[ \begin{pmatrix}
\E_\mu [X_{n+1}]\\[0.3em]
\E_\mu[Y_{n+1}]
\end{pmatrix}
\le
\begin{pmatrix}
Q & Q \\[0.3em]
P & 0
\end{pmatrix}
\begin{pmatrix}
\E_\mu[X_{n}]\\[0.3em]
\E_\mu[Y_{n}]
\end{pmatrix}\]
Thus,
\[ \begin{pmatrix}
\E_\mu[X_{n}]\\[0.3em]
\E_\mu[Y_{n}]
\end{pmatrix}
\le
\begin{pmatrix}
Q & Q \\[0.3em]
P & 0
\end{pmatrix}^n
\begin{pmatrix}
\E_\mu[X_{0}]\\[0.3em]
\E_\mu[Y_{0}]
\end{pmatrix}\]
As before, Borel-Cantelli's lemma shows that survival is impossible if the spectral radius of the matrix
$M=\begin{pmatrix}
Q & Q \\[0.3em]
P & 0
\end{pmatrix}$ is less than~1.
The characteristic polynomial of the matrix is
$$\chi_M(X)=X^2-QX-PQ.$$
The discriminant $Q^2+4PQ$ is strictly positive, the two eigenvalues $r_1$ and $r_2$ are real; the one with the larger modulus (which we denote $r_1$) is positive; the other is negative. Let $s_i=r_i-1$.
We have $s_2<r_2<0$, so
\[ (r_1<1)\iff (s_1<0) \iff s_1s_2>0.\]
Now $s_1s_2=(1-\lambda_1)(1-\lambda_2)=\chi_M(1)=1-Q-PQ$, so
survival is impossible if $Q (1+P)<1$.
\end{proof}

\begin{lemme}
In family A, if $Q(1+P)>1$ and $P>1$, survival is possible.
\end{lemme}

\begin{proof}
Suppose $Q(1+P)>1$ and $P>1$ and show that survival is possible.

First, note that if survival is possible for a pair of reproduction laws, it will also be possible for a pair of laws that is stochastically larger. 
This allows us to reduce the case to $Q<\frac{P}2$. Indeed, if we do not have $Q<\frac{P}2$, since $\frac1{1+P}<\frac12<\frac{P}2$, we can find $Q'$ such that $\frac1{1+P}<Q'<\frac{P}2$. We then have $Q'<\frac{P}2\le Q$, $Q'(1+P)>1$.
We can then replace $\mu_{1,1}$ by $\mu'_{1,1}=\frac{Q-Q' }{Q}\delta_0+\frac{Q'}{Q}\mu_{1,1}$: this distribution satisfies the conditions imposed on the expectation and is stochastically dominated by $\mu_{1,1}$.
Similarly, we can reduce this to the case where the distributions of $\mu_{1,1}$ and $\mu_{2,2}$ have finite support. Indeed, if $W^{i,i} $ follows the distribution $\mu_{i,i}$ and we set
$W^{i,i}_n=W^{i,i}\wedge n$, $Q_n=\E(W^{1,1}_n)$ and $P_n=\E(W^{2,2}_n)$,
then for sufficiently large $n$, we have $Q_n(1+P_n)>1$, $P_n>1$ and $Q_n<\frac {P_n}2$.

From now on, we assume that $Q(1+P)>1$, $P>1$, $Q<P/2$, and that the reproduction laws have finite support.

The dynamics of $(X_n,Y_n)_{n\ge 0}$ is given by the recurrence

\begin{align*}X_ {n+1}=\sum_{k=1}^{X_n+Y_n} W^{(1,1)}_k &\quad
Y_{n+1}=\sum_{k=1}^{\min(X_n,Y_n)} W^{(2,2)}_k
\end{align*}
Let $X'_0=X_0$, $Y'_0=Y_0$, then
\begin{align*}X'_{n+1}=\sum_{k=1}^{X'_n+Y'_n} W^{(1,1)}_k &\quad
Y'_{n+1}=\sum_{k=1}^{X'_n} W^{(2,2)}_k,
\end{align*}

as well as \[S=\{\forall n\ge 0, \min(X_n,Y_n)>0)\}, \quad S'=\{\forall n\ge 0, \min(X'_n,Y'_n)>0)\},\]
and also \(M_n=\{\forall k\ge n;\quad X'_k\le Y'_k\}\).
It is easy to see that we have the inclusion 

\[M_0 \subset \{\forall n\ge 0,\quad (X_n,Y_n)=(X'_n,Y'_n)\}.\]

We deduce that \(\P(S)\ge \P(S',M_0)\).

Now, $(X'_n,Y'_n)_{n\ge 0}$ is a two-type Galton-Watson chain whose reproduction matrix is precisely the transpose of the matrix $M$.
appearing in Lemma~\ref{paspossible}.

Harris's theory tells us that the survival of the chain $(X'_n,Y'_n)_{n\ge 0}$ is possible as soon as $\rho(M)>1$. Thus, returning to the calculations made previously, we see that
$Q(1+P)>1$ implies $\lambda_1=\rho(M)>1$, and therefore $\P(S')>0$.

Still according to Harris's theory (see, for example, Harris~\cite{harris1963theory}, th. 9.2 p 44), there exists a random variable $W$ such that on the event $S'$,

\[ \left(\frac{X'_n}{\lambda^n},\frac{Y'_n}{\lambda^n}\right)\to Wv,\]

where $v$ is an eigenvector on the right for $M$ associated with the eigenvalue $\rho(M)$. We can take $v=(\lambda_1\quad P)$. Let's compare $\lambda_1$ and $P$.
As before, we form the calculation

\[(P-\lambda_1)(P-\lambda_2)=\chi_M(P)=P^2-2PQ=P(P-2Q)>0.\]

Since $P-\lambda_2>P>0$, we deduce that $P-\lambda_1>0$, or $P>\lambda_1$.

Thus, on the event $S$, $\frac{X'_n}{Y'_n}\to \frac{\lambda_1}{P}<1$, which implies that
$\P_\mu(S')=\P_\mu(S',\cup_{n\ge 1} M_n)$.
According to the sequential increasing continuity theorem, there exists $n$ such that $P_\mu(S',M_n)>0$. With the Markov property, there exist $(a,b)$
such that $\P^{(a,b)}_{\mu} (S',M_0)>0$, which gives the desired result.

\end{proof}

\begin{theo}
\label{theoA}
In model $A$, survival is possible if and only if we simultaneously have $P>1$ and $Q(1+P)>1$.
\end{theo}
\begin{proof}
We have shown that survival is impossible if $P<1$ or $Q(1+P)>1$, while survival is possible if $P>1$ and $Q(1+P)>1$.
 
The continuity theorem~\ref{theocontinuite} then allows us to say that survival is impossible on the critical line.
\end{proof}

\paragraph {\textbf{Illustration}}

To conclude the study of model A, we illustrate it with a concrete family:
we will take
\begin{itemize}
\item $\mu_{1,1}=\ber(2,q)$, so $Q=2q$;
\item $\mu_{2,2}=\ber(2,p)$, so $P=2p$.
\end{itemize}
The survival condition therefore becomes $p>\max(\frac12,\frac1{2(1+2p)})$.

If we take $N=1$ and $T=1$, the locality condition tells us that for survival, it suffices to have $\E^{(1,1)}_{\mu}(Z_1)>1$.

We then have

\begin{align*}
\E_{p,q}^{(1,1)}(Z_1)&=\int_{\R^2}\min(s,t) \ d\mu_{(1,1)}(s,t)
\end{align*}
In other words, $h(p,q)= \E_{p,q}^{(1,1)}(Z_1)$ is $\E (\min(U,V))$ where
$U$ and $V$ are independent random variables, following respectively
$\ber(4,q)$ and $\ber(p, 2)$. A simple calculation gives

$$h(p,q)=4p^2q^4-12p^2q^3+12p^2q^2-4p^2q-2pq^4+8pq^3-12pq^2+8pq.$$

We can plot the theoretical survival area, the Monte Carlo estimate of the survival property , and the area where $h(p,q)>1$ on the same graph.

\begin{figure}[h!t]
 
\centering
\begin{tabular}{cc}
\includegraphics[scale=0.42]{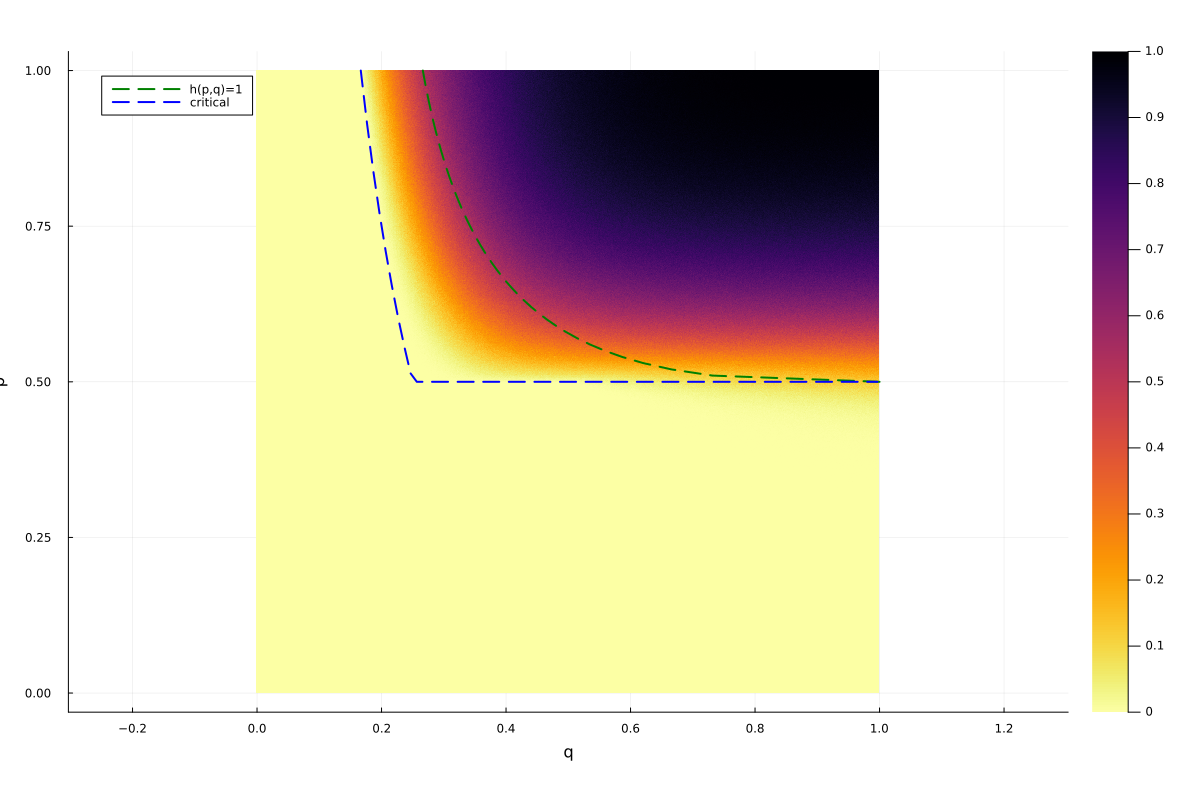}
\end{tabular}
\caption{Estimation of $\P^{(1,1)}_{p,q}(\tau>\inf\{n\ge 0;\max(X_n,Y_n)>10^8\})$. }
\end{figure}

\subsubsection{Parametric study of family B}

We  restrict ourselves to the case where $\mu_{1,1}=\mu_{1,2}$: the offspring of both types generated by asexual reproduction are then the same.

The new hypotheses are then the $B’$ hypotheses: $\mu_{1,1} =\mu_{1,2}$, $\mu_{1,1}(0)\in]0,1[]$.

We still set $Q=\int x \ d\mu_{1,1}(x)$ and $P=\int x \ d\mu_{2,2}(x)$.

\begin{theo}
Under the hypotheses $B'$, survival is possible if and only if \[Q(2+P)>1.\]
\end{theo}

\begin{proof}
The line of proof is the same as in family $A$.
We can couple the chain under study with a two-type Galton-Watson chain that dominates it and coincides with it with strictly positive probability

\[\begin{aligned}X'_{n+1}&=\sum_{k=1}^{X'_n+Y'_n} W^{(1,1)}_k\\
Y'_{n+1}&=\sum_{k=1}^{X'_n+Y'_n} W^{(1,2)}_k+\sum_{k=1}^{X'_n} W^{(2,2)}_k\\
\end{aligned}\]

The reproduction matrix associated with $(X'_n,Y'_n)$ is $N=\begin{pmatrix}
Q & Q+P \\[0.3em]
Q & Q
\end{pmatrix} $.

We have $\chi_N(X)=X^2-2QX-PQ$. $\chi_N$ has two real roots $\lambda_1$ and $\lambda_2$, the one with the larger modulus $(\lambda_1)$ is positive, the other is negative.
Since $1-\lambda_2>1$, $1-\lambda_1$ has the sign of
\[(1-\lambda_1)(1-\lambda_2)=\chi_N(1)=1-2Q-PQ,\]
so that the system $(X'_n,Y'_n)$, and, a fortiori, the system $(X_n,Y_n)$, dies out almost surely if $Q(2+P)<1$.

Conversely, if $Q(2+P)>1$, the chain $(X'_n,Y' _n)$ survives with strictly positive probability according to the Harris theory. $(Q\quad \lambda_1-Q)$ is a left eigenvector for $N$; it gives the asymptotic direction of $(X'_n,Y'_n)$ when there is survival.

Since $2Q-\lambda_2>2Q$, $2Q-\lambda_1$ has the sign of
\[(2Q-\lambda_1)(2Q-\lambda_2)=\chi_N(2Q)=-PQ<0,\] so $Q<\lambda_1-Q$: as before, on the event of survival of $(X'_n,Y'_n)$ we have $\frac{X'_n}{Y'_n}\to \frac{Q}{\lambda_1}-Q< 1$, from from which we deduce that with strictly positive probability, we have $X_n<Y_n$ for all $n$, and then on this event, $(X_n,Y_n)$ and $(X'_n,Y'_n)$ coincide and both survive.
The proof ends as in Theorem~\ref{theoA}.

\end{proof}

\paragraph{Illustration}

As for model A, we illustrate model B' in the concrete family:
\begin{itemize}
\item $\mu_{1,1}=\ber(2,q)$, so $Q=2q$;
\item $\mu_{2,2}=\ber(2,p)$, so $P=2p$.
\end{itemize}
The survival condition therefore becomes $4q(1+p)>1$.

As before, we represent the theoretical survival area with the Monte Carlo estimate of the survival property.

\begin{figure}[h!t]
\centering
\begin{tabular}{cc}
\includegraphics[scale=0.42]{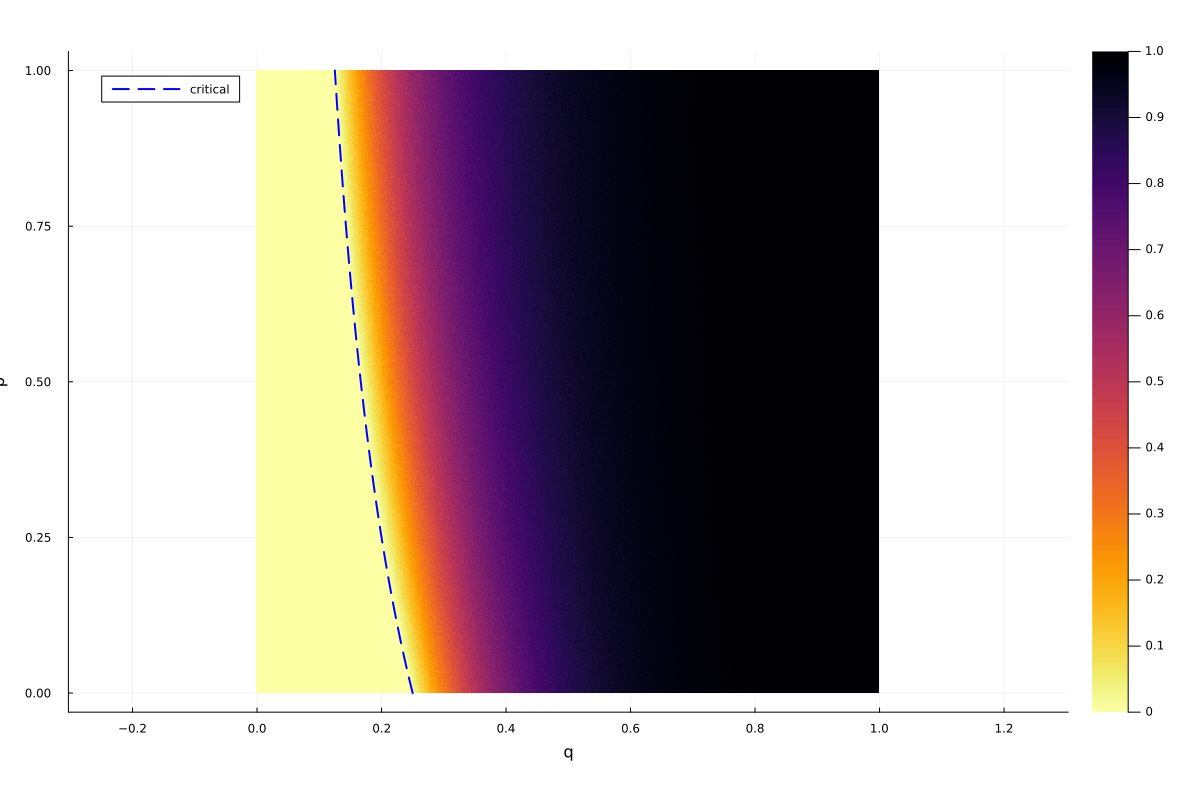}
\end{tabular} 
\caption{Estimation of $\P^{(1,1)}_{p,q}(\tau>\inf\{n\ge 0;\max(X_n,Y_n)>10^8\})$.}
\end{figure}

\newpage
\section*{Appendix: source code in Julia}

\codeJulia{code-julia-3}
%\nocite{*} 

\def\refname{References}
\bibliographystyle{plain}
\bibliography{galton-revisite-2025_12}

\end{document}